\definecolor{mygray}{gray}{0.85}
\renewcommand{\leq}{\leqslant}
\renewcommand{\geq}{\geqslant}
\def\subsection{\@startsection{subsection}{3}%
  \z@{.5\linespacing\@plus.7\linespacing}{.3\linespacing}%
  {\bfseries\centering}}
\def\subsubsection{\@startsection{subsubsection}{3}%
  \z@{.5\linespacing\@plus.7\linespacing}{.3\linespacing}%
  {\centering}}
\def\myfnt{\ifx\protect\@typeset@protect\expandafter\footnote\else\expandafter\@gobble\fi}
\newtheorem{theorem}{Theorem}
\newtheorem{corollary}[theorem]{Corollary}
\newtheorem{definition}[theorem]{Definition}
\newtheorem{proposition}[theorem]{Proposition}
\newtheorem{fact}[theorem]{Fact}
\newtheorem*{nfact}{Fact}
\newcounter{claimcounter}
\newcommand{\pureindep}[1][]{%
  \mathrel{
    \mathop{
      \vcenter{
        \hbox{\oalign{\noalign{\kern-.3ex}\hfil$\vert$\hfil\cr
              \noalign{\kern-.7ex}
              $\smile$\cr\noalign{\kern-.3ex}}}
      }
    }\displaylimits_{#1}
  }
}
\newcommand{\indep}[2]{%
  \mathrel{
    \mathop{
      \vcenter{
        \hbox{%
\oalign{
\noalign{\kern-.3ex}\hfil$\vert$\hfil\cr
              \noalign{\kern-.7ex}
              $\smile$\cr\noalign{\kern-.3ex}
}
}
      }
}^{\!\!\!\!\!#2}_{\!\!\hspace{-0.1em}#1}
  }
}
\newcommand{\displayindep}[2]{%
  \mathrel{
    \mathop{
      \vcenter{
        \hbox{%
\oalign{
\noalign{\kern-.3ex}\hfil$\vert$\hfil\cr
              \noalign{\kern-.7ex}
              $\smile$\cr\noalign{\kern-.3ex}
}
}
      }
}^{\!\!\hspace{-0.1em}#2}_{\!\!\hspace{-0.1em}#1}
  }
}
\newcommand{\displayfindep}[2]{%
  \mathrel{
    \mathop{
      \vcenter{
        \hbox{%
\oalign{
\noalign{\kern-.3ex}\hfil$\vert$\hfil\cr
              \noalign{\kern-.7ex}
              $\smile$\cr\noalign{\kern-.3ex} 
}
}
      }
}^{\!\hspace{-0.14em}#2}_{\!\!\hspace{-0.05em}#1}
  }
}
\begin{document}

\begin{abstract} We show that in algebraically locally finite countable homogeneous structures with a free stationary independence relation the small index property implies the strong small index property. We use this and  the main result of \cite{siniora} to deduce that countable free homogeneous structures in a locally finite relational language have the strong small index property. We also exhibit new continuum sized classes of $\aleph_0$-categorical structures with the strong small index property whose automorphism groups are pairwise non-isomorphic.
\end{abstract}

\title[The SSIP for Free Homogeneous Structures]{The Strong Small Index Property for Free Homogeneous Structures}
\thanks{Partially supported by European Research Council grant 338821. No. 1108 on Shelah's publication list.}

\author{Gianluca Paolini}
\address{Einstein Institute of Mathematics,  The Hebrew University of Jerusalem, Israel}

\author{Saharon Shelah}
\address{Einstein Institute of Mathematics,  The Hebrew University of Jerusalem, Israel \and Department of Mathematics,  Rutgers University, U.S.A.}

\date{\today}
\maketitle


\section{Introduction}

	The {\em small index property} (SIP) of a countable structure $M$ asserts that any subgroup of $Aut(M)$ of index $< 2^{\aleph_0}$ contains the pointwise \mbox{stabilizer of a finite $A \subseteq M$.}
	
	The small index property received remarkable attention from the logic community over the years, for a nice and thorough overview see \cite[Section 4.2.6]{hodges}. The most fundamental consequence of SIP is that this property allows to recover the topological structure of $Aut(M)$ from the abstract group structure of $Aut(M)$, since a countable structure with SIP is such that the open subgroups of $Aut(M)$ are exactly the subgroups of small index (i.e. of index $<2^{\aleph_0}$). A classical result related to the power of SIP to ``remember'' model-theoretic properties is the following:
	
	\begin{nfact}[Lascar \cite{lascar}] Let $M$ and $N$ be countable $\aleph_0$-categorical structures and suppose that $M$ has SIP. Then $Aut(M) \cong Aut(N)$ iff $M$ and $N$ are bi-interpretable.
\end{nfact}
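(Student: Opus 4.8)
The plan is to reduce to the Ahlbrandt--Ziegler theorem, which characterises bi-interpretability of $\aleph_0$-categorical structures in terms of \emph{topological} isomorphism of their automorphism groups, and then to use the SIP of $M$ to promote the given abstract isomorphism to a topological one.

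For the ``if'' direction, suppose $M$ and $N$ are bi-interpretable. Each of the two interpretations witnessing this induces a continuous homomorphism between $Aut(M)$ and $Aut(N)$: an interpretation of $N$ in $M$ yields an $M$-definable copy $N'$ of $N$, on which every automorphism of $M$ acts, and transporting along a fixed isomorphism $N' \cong N$ (using that $N$, being $\aleph_0$-categorical, is determined by $N'$) produces an automorphism of $N$. The coherence conditions in the definition of bi-interpretability say exactly that the two round-trip interpretations are homotopic to the identity interpretations, i.e.\ agree with them up to a definable bijection, which forces the two induced homomorphisms to be mutually inverse. Hence $Aut(M) \cong Aut(N)$ as topological groups, a fortiori as abstract groups.

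For the ``only if'' direction, let $\varphi : Aut(M) \to Aut(N)$ be an abstract group isomorphism and write $G = Aut(M)$, $H = Aut(N)$. Both are Polish groups, being closed subgroups of the symmetric group on a countable set, and a neighbourhood basis at $1_H$ is given by the open subgroups $H_{\bar b}$, the pointwise stabilisers of finite tuples $\bar b$ from $N$. Every open subgroup of a Polish group has countable index, since a separable space cannot have uncountably many pairwise disjoint nonempty open subsets; thus $[H : H_{\bar b}] \leq \aleph_0 < 2^{\aleph_0}$, so $\varphi^{-1}(H_{\bar b})$ is a subgroup of $G$ of index $< 2^{\aleph_0}$, and by the SIP of $M$ it contains the pointwise stabiliser of a finite subset of $M$, hence is open in $G$. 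As the $H_{\bar b}$ form a neighbourhood basis at the identity, $\varphi$ is continuous. Finally, by the open mapping theorem for Polish groups a continuous bijective homomorphism between Polish groups is a homeomorphism, so $\varphi$ is in fact a topological isomorphism, and the Ahlbrandt--Ziegler theorem yields that $M$ and $N$ are bi-interpretable.

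The only genuinely substantial ingredient is the Ahlbrandt--Ziegler characterisation itself, whose hard half reconstructs a bi-interpretation from a topological isomorphism using that in an $\aleph_0$-categorical structure the definable sets are recovered from the lattice of open subgroups; the rest is soft. The point worth flagging is the appeal to the open mapping theorem: it is what delivers continuity of $\varphi^{-1}$ without further hypotheses, so that the SIP is needed for $M$ only and not for $N$ as well.
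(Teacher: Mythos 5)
The paper does not prove this statement: it is quoted as a known result of Lascar, so there is no in-paper argument to compare against. Your proof is the standard (and correct) one --- SIP of $M$ plus the fact that open subgroups of $Aut(N)$ have index $\leq \aleph_0 < 2^{\aleph_0}$ gives continuity of the abstract isomorphism, the open mapping theorem for Polish groups upgrades it to a topological isomorphism, and the Ahlbrandt--Ziegler theorem translates this into bi-interpretability of the two $\aleph_0$-categorical structures.
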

	
	A variant of SIP which is less known (but very powerful) is the {\em strong} small index property (SSIP), which requires that every subgroup of $Aut(M)$ of index less than $2^{\aleph_0}$ lies between the pointwise  and the setwise stabilizer of a finite set $A \subseteq M$. This property allows for even finer reconstruction results (i.e. to ``remember'' more).
	
	For example, in \cite{cameron} Cameron proves that the random graph has the strong small index property and uses this to show that its group of automorphisms is not isomorphic to any group of automorphisms of a graph (other than the random graph) or digraph which is transitive on vertices, ordered edges, and ordered non-edges.
	Another remarkable example is the following reconstruction result recently proved by the authors (and inspired by a famous earlier work of Rubin \cite{rubin}):
		
	\begin{nfact}[\cite{Sh_Pa_recon}] Let $M$ and $N$ be countable $\aleph_0$-categorical structures with the strong small index property and no algebraicity. Then $ Aut(M)$ and $Aut(N)$ are isomorphic as abstract groups if and only if $M$ and $N$ are bi-definable. 
\end{nfact}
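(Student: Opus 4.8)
The backward implication is immediate: bi-definable structures have literally the same underlying set and the same automorphism group. So everything is in the forward direction, where from an abstract group isomorphism $\Phi \colon \mathrm{Aut}(M) \to \mathrm{Aut}(N)$ one has to produce a $\emptyset$-definition of $N$ inside $M$ and a $\emptyset$-definition of $M$ inside $N$ that are mutually inverse.

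The first step is to promote $\Phi$ to a topological isomorphism. Since SSIP implies SIP, both $M$ and $N$ have SIP, and for a countable structure with SIP the open subgroups of the automorphism group are exactly the subgroups of index $< 2^{\aleph_0}$ (this is the observation underlying the proof of Lascar's Fact quoted above). An abstract group isomorphism preserves the class of subgroups of small index, so $\Phi$ and $\Phi^{-1}$ map open subgroups to open subgroups; hence $\Phi$ is a homeomorphism. By the Ahlbrandt--Ziegler reconstruction theorem, a topological isomorphism between the automorphism groups of $\aleph_0$-categorical structures already gives that $M$ and $N$ are bi-interpretable, so the remaining job is to upgrade ``bi-interpretable'' to ``bi-definable''.

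For this upgrade the plan --- in the spirit of Rubin's $\forall\exists$-interpretations \cite{rubin} --- is to reconstruct the action of $G := \mathrm{Aut}(M)$ on the set $M$ itself, not merely on some imaginary sort, directly from the lattice of open subgroups of $G$, and to check that SSIP together with the absence of algebraicity forces this reconstruction through on the nose. By SSIP every open $H \le G$ satisfies $G_{(A)} \le H \le G_{\{A\}}$ for some finite $A \subseteq M$, and because $M$ has no algebraicity we have $\mathrm{acl}(A) = A$, so $A$ is recovered from $H$ as the fixed-point set of its pointwise part $G_{(A)}$; in particular the stabilizer of a finite set of size $\ge 2$ appears as a proper intersection $\bigcap_{a \in A} G_a$, whereas the point stabilizers $G_a$ ($a \in M$) can be singled out group-theoretically inside the open-subgroup lattice (as suitable meet-irreducible elements). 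One then checks that $\Phi$ carries the conjugacy class of point stabilizers of $M$ to that of $N$, respecting the relation ``$H \le H'$'', and, reducing by $\aleph_0$-categoricity to the finitely many orbits of $G$ on each $M^n$, one reads the resulting correspondence between the coset spaces $G/G_a$ and their $N$-counterparts as a pair of mutually inverse $\emptyset$-definitions.

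I expect the main obstacle to be exactly this last step: isolating the purely group-theoretic signature of a single-point stabilizer, as opposed to that of the stabilizer of a larger finite set or of an algebraic imaginary, and then verifying that the induced bijection between the domains of $M$ and $N$ is first-order definable rather than only interpretable. The hypothesis of no algebraicity is what makes this feasible, as it removes the finite covers and nontrivial algebraic imaginaries that in general are only recoverable up to interpretation; and it is SSIP rather than plain SIP that is needed here, since SSIP is what pins down the gap between pointwise and setwise stabilizers and so lets the set $A$ above be read off the subgroup $H$ without ambiguity.
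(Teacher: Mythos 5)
This statement is quoted in the paper as a \emph{Fact} imported from \cite{Sh_Pa_recon}; the present paper contains no proof of it, so there is nothing internal to compare your argument against, and your proposal has to stand on its own as a proof of the cited theorem. As such it does not yet: it is an outline whose decisive step is left open, and you say so yourself (``I expect the main obstacle to be exactly this last step''). The easy parts are fine --- the backward direction, the promotion of $\Phi$ to a homeomorphism via SIP, and the observation that once one has a bijection $f\colon M\to N$ with $\pi(\alpha)=f\alpha f^{-1}$ the preservation of $\emptyset$-definable sets is automatic from Ryll--Nardzewski (so your final worry about the bijection being ``definable rather than only interpretable'' is misplaced; the whole difficulty is in producing $f$ at all, exactly as in the ``Moreover'' clause of Fact \ref{reconstruction}). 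The Ahlbrandt--Ziegler detour through bi-interpretability does no work in your scheme, since the subsequent construction restarts from the open-subgroup lattice.

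The genuine gap is the group-theoretic identification of the point stabilizers, and the specific criterion you propose fails. Under no algebraicity and SSIP, every proper open subgroup $H$ satisfies $G_{(A)}\leq H\leq G_{\{A\}}$ for a canonical finite nonempty $A$, and one checks that every \emph{maximal} open subgroup is of the form $G_{\{A\}}$; the point stabilizers $G_a$ are among these, but so are setwise stabilizers of larger sets. For instance, in the random graph $G_{\{a,b\}}$ is itself a maximal open subgroup (any open $H\supseteq G_{\{a,b\}}$ has canonical set contained in $\{a,b\}$, and it cannot be a singleton since $G_{\{a,b\}}\not\leq G_a$), hence it is vacuously meet-irreducible in the lattice of open subgroups, just as $G_a$ is. Your remark that stabilizers of sets of size $\geq 2$ ``appear as proper intersections $\bigcap_{a\in A}G_a$'' concerns the pointwise stabilizer $G_{(A)}$, not the setwise stabilizer $G_{\{A\}}$, which is the actual competitor that must be excluded. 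Separating the conjugacy class $\{G_a: a\in M\}$ from the classes $\{G_{\{A\}}: |A|\geq 2\}$ by an abstract-group-theoretic (isomorphism-invariant) property is precisely the content of the Rubin-style argument in \cite{Sh_Pa_recon}, and no such property is supplied here; until it is, the map $f$ between the domains cannot be defined and the forward implication is not proved.
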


	Thus, the isolation of good sufficient conditions for SSIP is a valuable tool, which broadens the domain of application of the general results in reconstruction theory which assume SSIP. This is the topic of the present note. We prove here:

	\begin{theorem}\label{SIP_implies_SSIP} Let $M$ be an algebraically locally finite countable homogeneous structure which admits a free stationary independence relation (cf. Section~\ref{sec2}). If $M$ has the small index property, then $M$ has the strong small index property.
\end{theorem}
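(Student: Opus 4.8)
The plan is to use the small index property to make $H$ a \emph{closed} subgroup of $\mathrm{Aut}(M)$, and then to reduce the whole statement to a single generation lemma for pointwise stabilizers of finite algebraically closed sets — the place where the free stationary independence relation enters.

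Write $G=\mathrm{Aut}(M)$, and for $A\subseteq M$ let $G_{(A)}$ and $G_{\{A\}}$ be the pointwise and setwise stabilizers of $A$. Let $H\le G$ have index $<2^{\aleph_0}$. By the small index property $G_{(A_0)}\le H$ for some finite $A_0\subseteq M$; since $G_{(A_0)}$ is an open subgroup of $G$, so is $H=\bigcup_{h\in H}hG_{(A_0)}$, hence $H$ is closed. By algebraic local finiteness I may replace $A_0$ by $\mathrm{acl}(A_0)$ (still finite, and still with $G_{(\mathrm{acl}(A_0))}\le G_{(A_0)}\le H$), so the family of finite algebraically closed $C\subseteq M$ with $G_{(C)}\le H$ is nonempty; fix a member $B$ of this family of least cardinality. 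Then $G_{(B)}\le H$ is one of the two inclusions demanded by SSIP, and it remains to prove $H\le G_{\{B\}}$.

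The crux is the following claim, whose proof is where I expect the real work (and the use of freeness) to lie: \emph{for all finite algebraically closed $X,Y\subseteq M$ one has $\overline{\langle G_{(X)},G_{(Y)}\rangle}=G_{(X\cap Y)}$}. Granting this, the theorem follows immediately. Suppose $h\in H$ with $h(B)\ne B$. Since $h\in H$ and $G_{(B)}\le H$, we get $G_{(h(B))}=hG_{(B)}h^{-1}\le H$, so $\langle G_{(B)},G_{(h(B))}\rangle\le H$; as $H$ is closed, $\overline{\langle G_{(B)},G_{(h(B))}\rangle}\le H$, and the claim — applied to the finite algebraically closed sets $B$ and $h(B)$ — gives $G_{(B\cap h(B))}\le H$. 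But $B\cap h(B)$ is algebraically closed (being an intersection of such), and since $|h(B)|=|B|$ and $h(B)\ne B$ we have $B\cap h(B)\subsetneq B$, so $|B\cap h(B)|<|B|$, contradicting the minimality of $B$. Hence $h(B)=B$ for every $h\in H$, i.e.\ $H\le G_{\{B\}}$; together with $G_{(B)}\le H$ this yields SSIP.

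It remains to address the claim, which I regard as the main obstacle. The inclusion $\subseteq$ is trivial, as $G_{(X)},G_{(Y)}\subseteq G_{(X\cap Y)}$ and $G_{(X\cap Y)}$ is closed. For $\supseteq$, fix $g\in G_{(X\cap Y)}$; it is enough to approximate $g$, on each finite subset of $M$, by a product of automorphisms alternately fixing $X$ and $Y$ pointwise, and this I would do by a back-and-forth. At a typical stage one has matched $g$ on a finite set and wants to extend the match by one point, which amounts to realizing a prescribed type over $X$ (respectively over $Y$) of a finite tuple already realizing the correct type over $X\cap Y$. Here the existence and stationarity axioms are used to move the relevant copy of $X$ (respectively $Y$) into a position independent from the other side over $X\cap Y$, and \emph{freeness} ensures that the resulting amalgam creates no new algebraic elements over $X\cap Y$, so that the required type is consistent and the construction never gets stuck. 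Making this bookkeeping precise — tracking which algebraic closures appear at each step, and which of the two sides may be moved — is the technical heart of the proof, and is exactly where a merely stationary independence relation would not be enough.
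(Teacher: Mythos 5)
Your reduction is sound and follows essentially the paper's route: the paper invokes Fact~\ref{fact_1} (Hodges, Theorem 4.2.9) to pass from the generation statement for pointwise stabilizers of finite algebraically closed sets to SSIP, whereas you reprove that reduction by hand via the closedness of $H$ (correctly extracted from SIP) and minimality of $|B|$; both work. The genuine gap is that you then leave the generation claim itself --- which you rightly call ``the main obstacle'' and which is the \emph{only} place the free stationary independence relation enters --- as an unexecuted back-and-forth sketch. As it stands, the heart of the theorem is missing. Moreover, your sketch points in a more complicated direction than necessary: no approximation, no closure, and no bookkeeping of algebraic closures along a back-and-forth is needed, because the equality holds exactly, $G_{(A\cap B)}=\langle G_{(A)}\cup G_{(B)}\rangle_G$, witnessed by a product of four automorphisms.

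For the record, here is the actual argument (an abstract version of \cite[Lemma 27]{tent_mac}). Let $g\in G_{(A\cap B)}$. By Existence pick $h_1\in G_{(A)}$ with $h_1g(A)\pureindep[A]B$, and then $h_2\in G_{(B)}$ with $h_2h_1g(A)\pureindep[B]A$; since $g,h_1,h_2$ all fix $A\cap B$ pointwise, $h_2h_1g(A)\cap B=A\cap B$, so Freeness (applied with base $B$ shrunk to $A\cap B$) yields $h_2h_1g(A)\pureindep[A\cap B]A$. Again by Existence take $h_3\in G_{(B)}$ with $h_3(A)\pureindep[B]A$; the same reasoning gives $h_3(A)\pureindep[A\cap B]A$. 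Now $h_2h_1g\,h_3^{-1}$ is an automorphism fixing $A\cap B$ pointwise and carrying $h_3(A)$ to $h_2h_1g(A)$ elementwise, so $h_3(A)\equiv_{A\cap B}h_2h_1g(A)$, and Stationarity over the base $A\cap B$ with second coordinate $A$ produces $h_4\in G_{(A)}$ with $h_4h_2h_1g\restriction A=h_3\restriction A$. Hence $h_*:=h_3^{-1}h_4h_2h_1g\in G_{(A)}$ and $g=h_1^{-1}h_2^{-1}h_4^{-1}h_3h_*\in\langle G_{(A)}\cup G_{(B)}\rangle_G$. You should replace your closing paragraph with this (or an equivalent) argument; until then the proof is incomplete at its crucial step.
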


	We then use Theorem \ref{SIP_implies_SSIP} and the main result of \cite{siniora} to infer:

	\begin{corollary}\label{SSIP_free} Let $M$ be a countable free homogeneous structure in a locally finite irreflexive relational language. Then $M$ has the strong small index property.
\end{corollary}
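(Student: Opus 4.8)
The plan is to verify that $M$ satisfies the hypotheses of Theorem~\ref{SIP_implies_SSIP} and then to read off the remaining one, the small index property, from the main result of \cite{siniora}. Recall that a \emph{free homogeneous structure} is the Fra\"iss\'e limit of a class $\mathcal{K}$ of finite structures (here in a locally finite irreflexive relational language) closed under free amalgamation: for $A \subseteq B$ and $A \subseteq C$ in $\mathcal{K}$, the \emph{free amalgam} $B \otimes_A C$ — the structure on the disjoint union of $B$ and $C$ over $A$ in which no relation holds of any tuple meeting both $B \setminus A$ and $C \setminus A$ — again lies in $\mathcal{K}$. Two of the three hypotheses of Theorem~\ref{SIP_implies_SSIP} are then immediate: $M$ is countable and homogeneous by construction, and since free amalgamation forces $\operatorname{acl}$ to be trivial (one can freely amalgamate arbitrarily many disjoint copies of a given finite structure over a common finite part, all inside $\mathcal{K}$, hence realized in $M$), the algebraic closure of a finite set is finite, so $M$ is algebraically locally finite; in fact it has no algebraicity.

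The substantive step is to equip $M$ with a free stationary independence relation, and the obvious candidate is the one carried by free amalgamation. For finite tuples, declare $A$ independent from $B$ over $C$ exactly when $A \cap B \subseteq C$ and the substructure of $M$ on $A \cup B \cup C$ is isomorphic over $C$ to the free amalgam over $C$ of its restrictions to $A \cup C$ and $B \cup C$ — equivalently, when no relation of the language links a tuple meeting $A \setminus C$ to one also meeting $B \setminus C$ — and extend to arbitrary parameter sets by finite character. I would then check the axioms listed in Section~\ref{sec2}: invariance under $Aut(M)$ is clear; monotonicity and symmetry fall straight out of the definition of the free amalgam; existence holds because $B \otimes_C A \in \mathcal{K}$ and hence embeds over $C$ into $M$ by the extension property; transitivity is a short diagram chase with iterated free amalgams; and stationarity holds because the free amalgam of two structures over $C$ is determined up to isomorphism over $C$ by the isomorphism types of the two sides, so homogeneity of $M$ conjugates any two witnesses by an element of $Aut(M/C)$. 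It then remains to confirm that this relation is \emph{free} in the exact sense fixed in Section~\ref{sec2}; this should be built into the definition, since ``no relation across the independent sides'' together with triviality of $\operatorname{acl}$ is precisely the freeness requirement.

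With these hypotheses in hand, Theorem~\ref{SIP_implies_SSIP} reduces the strong small index property of $M$ to its small index property, and the latter is exactly what the main result of \cite{siniora} provides for Fra\"iss\'e limits of free amalgamation classes in a locally finite relational language — that result in fact yields ample generic automorphisms, and since finitely generated substructures of a relational structure are finite, this gives the small index property. Combining the two statements proves SSIP for $M$. The hard part of the argument is not the logical glue but the case-checking for the free-amalgamation independence relation: verifying stationarity and transitivity carefully, and checking that the relation literally instantiates the definition of a \emph{free stationary independence relation} adopted in Section~\ref{sec2}; everything else is routine bookkeeping with disjoint amalgams.
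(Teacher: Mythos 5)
Your proposal is correct and follows essentially the same route as the paper: establish that free amalgamation yields a free stationary independence relation (the paper's Proposition~\ref{prop_free}), invoke the Siniora--Solecki extension property for partial automorphisms (extended from finite to locally finite irreflexive relational languages) together with the Kechris--Rosendal machinery to get SIP, and then apply Theorem~\ref{SIP_implies_SSIP}. The only (shared) elisions are the finite-to-locally-finite upgrade of the EPPA result and the routine verification of the independence axioms, which the paper likewise leaves to the reader.
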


	Thus deducing:

\begin{corollary} The following structures have the strong small index property:
	\begin{enumerate}[(1)]
	\item the random graph \cite{rado};
	\item the universal homogeneous $K_n$-free graph ($n \geq 3$) \cite{henson_graphs};
	\item the $n$-coloured random graph \cite{truss};
	\item the random directed graph;
	\item the continuum many Henson digraphs \cite{henson_digraphs};
	\item the $k$-uniform random hypergraph ($k \geq 2$) \cite{thomas};
	\item the random $m$-free $k$-uniform hypergraph ($m > k \geq 2$) \cite{hru}.
	\end{enumerate}
\end{corollary}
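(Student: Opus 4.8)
The final statement is the Corollary listing the seven explicit structures with SSIP. The plan is essentially to check that each structure on the list falls within the scope of Corollary~\ref{SSIP_free}, i.e. that each is a countable free homogeneous structure in a locally finite irreflexive relational language. Since Corollary~\ref{SSIP_free} is already established (via Theorem~\ref{SIP_implies_SSIP} and \cite{siniora}), the work here is purely verification that the hypotheses apply in each case.

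First I would recall the relevant definitions precisely: a relational language is \emph{locally finite} if it has only finitely many relation symbols of each arity (and here one wants finitely many symbols overall, or at least that every finite set of symbols generates a locally finite situation), it is \emph{irreflexive} in the sense that the relations are required to hold only of tuples of distinct elements, and a homogeneous structure is \emph{free} when its age has free amalgamation — the amalgam of two structures over a common substructure adding no new instances of relations beyond those forced. Then I would go down the list. For (1) the random graph, the language is a single binary irreflexive symmetric relation, the Fra\"iss\'e class is all finite graphs, and free amalgamation is the standard disjoint amalgamation of graphs; homogeneity is Fra\"iss\'e's theorem. For (2) the Henson graphs (universal homogeneous $K_n$-free graph), the class of $K_n$-free graphs is closed under free amalgamation precisely because freely amalgamating two $K_n$-free graphs creates no new complete subgraph (any clique in the amalgam lies in one of the factors), so the generic is free homogeneous in the same one-binary-symbol language. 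For (3) the $n$-coloured random graph, the language has $n$ binary irreflexive relations (the colour classes) partitioning the edges; the age has free amalgamation and the language is finite relational. For (4) the random directed graph and (5) the Henson digraphs, the language is a single binary irreflexive (not necessarily symmetric) relation; the class of all finite digraphs, and for Henson digraphs the classes omitting a prescribed antichain $\mathcal{F}$ of tournaments, are free-amalgamation classes (a tournament in the free amalgam lies in one factor since free amalgamation puts no relations between the two sides), giving continuum many examples. For (6) the $k$-uniform random hypergraph, the language is a single $k$-ary irreflexive symmetric relation and all finite $k$-uniform hypergraphs form a free-amalgamation class; for (7) the random $m$-free $k$-uniform hypergraph one additionally forbids the complete $k$-uniform hypergraph on $m$ vertices, and as before this is preserved under free amalgamation. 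In every case the language is finite (hence locally finite) relational and irreflexive, and the age is a free-amalgamation class, so the unique countable homogeneous model is free homogeneous and Corollary~\ref{SSIP_free} applies, yielding SSIP.

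There is essentially no obstacle of a mathematical nature; the only points requiring a word of care are (a) confirming that the forbidden-configuration classes in (2), (5), (7) really are closed under \emph{free} (not merely strong) amalgamation — this is the standard observation that any copy of a forbidden irreflexive structure in a free amalgam must be contained in one of the two factors, because free amalgamation introduces no relations spanning both sides — and (b) noting that "locally finite language" is satisfied trivially since each listed language is finite. One might also remark that in (5) the continuum many Henson digraphs give continuum many pairwise non-isomorphic automorphism groups, tying back to the last sentence of the abstract, but that is a consequence of the reconstruction machinery rather than part of this corollary's proof. Hence the proof reduces to the table of verifications above, each line of which is immediate from the definition of free amalgamation.

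\begin{proof}
Each of the structures (1)--(7) is, by its standard Fra\"iss\'e-theoretic construction, the unique countable homogeneous model of a Fra\"iss\'e class in a finite irreflexive relational language, and in each case the class is a free-amalgamation class:
(1) all finite graphs (one symmetric binary relation); (2) all finite $K_n$-free graphs; (3) all finite $n$-edge-coloured graphs ($n$ symmetric binary relations partitioning the edges); (4) all finite digraphs (one binary relation); (5) for a fixed antichain $\mathcal{F}$ of finite tournaments, all finite digraphs omitting every member of $\mathcal{F}$ --- there are continuum many such classes; (6) all finite $k$-uniform hypergraphs (one symmetric $k$-ary relation); (7) all finite $m$-free $k$-uniform hypergraphs. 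Closure under free amalgamation in (1), (3), (4), (6) is the disjoint amalgamation of the relevant combinatorial objects, and in (2), (5), (7) it follows from the observation that a free amalgam introduces no instance of a relation spanning both factors, so any copy of a forbidden irreflexive configuration inside a free amalgam lies entirely in one of the two factors; hence the forbidden classes are preserved. A finite relational language is locally finite. Therefore each of (1)--(7) is a countable free homogeneous structure in a locally finite irreflexive relational language, and Corollary~\ref{SSIP_free} applies to give the strong small index property.
\end{proof}
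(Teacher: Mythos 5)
Your proposal is correct and matches the paper's (implicit) argument: the paper offers no proof beyond ``Thus deducing,'' treating the corollary as an immediate instance of Corollary~\ref{SSIP_free}, and your case-by-case verification that each listed structure is a countable free homogeneous structure in a finite (hence locally finite) irreflexive relational language is exactly the routine check being left to the reader. The only substantive point --- that forbidden-configuration classes in (2), (5), (7) are closed under free amalgamation because no new relation instances span the two factors --- is handled correctly.
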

	
	Finally, we define what we call $\eta$-hypergraphs and random $\zeta$-free $\eta$-hypergraphs, for some $\eta \in 2^{\omega}$ and $\zeta \in \omega^\omega$, and use Corollary \ref{SSIP_free} and \cite{Sh_Pa_recon} to show:

\begin{theorem}\label{pairwise_non_iso} Let $\mathbf{K}$ be one of the following two classes of countable structures:
	\begin{enumerate}
	\item the random $\eta$-hypergraphs $M(\eta)$, for some $\eta \in 2^{\omega}$ (cf. Definition \ref{iper_1});
	\item the random $\zeta$-free $\eta$-hypergraphs $M(\eta, \zeta)$, for some $\eta \in 2^{\omega}$ and $\zeta \in \omega^\omega$ (cf. Definition \ref{iper_2}).
\end{enumerate} 
If $M = M(x)$, $N = N(y) \in \mathbf{K}$ and $x \neq y$, then $Aut(M) \not\cong Aut(N)$. Moreover, every $M \in \mathbf{K}$ has the strong small index property.
\end{theorem}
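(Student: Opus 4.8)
The plan is to reduce both clauses to two separate tasks: (a) showing that each $M \in \mathbf{K}$ is a countable free homogeneous structure in a locally finite irreflexive relational language, so that Corollary \ref{SSIP_free} applies and gives SSIP; and (b) showing that the structures in each family are pairwise non-bi-definable, so that the reconstruction result \cite{Sh_Pa_recon} (quoted as the second Fact in the introduction) yields $Aut(M) \not\cong Aut(N)$ whenever $x \neq y$. For (b) I must also check the hypotheses of that Fact, i.e. that each $M \in \mathbf{K}$ is $\aleph_0$-categorical and has no algebraicity; $\aleph_0$-categoricity will follow from homogeneity in a finite-at-each-arity relational language together with the fact that there are only finitely many relation symbols of each arity (this is where I need the $\eta$-hypergraph language to be locally finite), and no algebraicity will follow from the fact that the relevant amalgamation class has the free amalgamation property, so that over any finite set one can always realize a new $1$-type with no new relations.

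For task (a): once Definitions \ref{iper_1} and \ref{iper_2} are in hand, an $\eta$-hypergraph has, for each $n$ with $\eta(n) = 1$ (or whatever the coding convention turns out to be), an $n$-ary hyperedge relation, and the class of finite $\eta$-hypergraphs (resp. $\zeta$-free $\eta$-hypergraphs, where the $\zeta$ parameter forbids certain configurations of bounded size) is a free amalgamation class in the sense of \cite{siniora}; I would verify the hereditary property, joint embedding, and free amalgamation directly from the definition, noting that the $\zeta$-freeness constraint is preserved under free amalgams precisely because free amalgamation introduces no new tuples in any relation. The Fra\"iss\'e limit is then the required homogeneous structure, the language is locally finite and irreflexive by construction, and Corollary \ref{SSIP_free} immediately gives SSIP. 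This also proves the ``moreover'' clause.

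For task (b), the key point is to recover the parameter $x$ from $Aut(M(x))$ up to the equivalence ``bi-definable''. By \cite{Sh_Pa_recon} it suffices to show $M(x)$ and $M(y)$ are not bi-definable when $x \neq y$. The cleanest route is to observe that from the abstract structure $M$ one can read off, for each $k$, whether there is a $\emptyset$-definable $k$-ary relation that is ``irreducible'' in the appropriate sense (not implied by relations of smaller arity and their Boolean combinations) — equivalently, which arities genuinely occur — and in the $\zeta$-free case one can further read off the forbidden-configuration data from the finite substructures that fail to embed. Since bi-definable structures have the same $\emptyset$-definable relations up to the obvious translation and the same age, two members of $\mathbf{K}$ with different parameters differ in one of these invariants: for clause (1), $\eta \neq \eta'$ means some arity $n$ carries a hyperedge relation in one and not the other; for clause (2) one argues similarly with the added $\zeta$-data distinguishing the omitted substructures. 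I expect the main obstacle to be (b) rather than (a): I must be careful that the coding in Definitions \ref{iper_1}--\ref{iper_2} is genuinely rigid, i.e. that the structure does not accidentally define a relation of some arity $n$ ``for free'' from lower-arity relations (which would collapse distinct $\eta$'s), and that the $\zeta$-free constraint is chosen so that the omitted configurations are not already omitted for trivial reasons in the plain $\eta$-hypergraph; handling these non-degeneracy checks carefully — essentially an analysis of which relations are primitive versus definable — is the crux of the argument.
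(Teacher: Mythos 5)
Your overall architecture matches the paper's: SSIP for every member of $\mathbf{K}$ comes from closure under free amalgamation plus Corollary \ref{SSIP_free} (and your checks of $\aleph_0$-categoricity and no algebraicity, needed for the reconstruction theorem, are correct in outline). The problem is task (b). You reduce the non-isomorphism claim to ``$M(x)$ and $M(y)$ are not bi-definable when $x \neq y$'', and then you say that the crux is to verify that the parameters $\eta$ and $\zeta$ can actually be read off from the definable structure --- that no arity is definable ``for free'' and that the forbidden configurations are not omitted for trivial reasons --- but you never carry out this verification. That verification \emph{is} the proof; everything before it is routine bookkeeping. The paper does exactly this missing step concretely: using the ``moreover'' clause of the reconstruction theorem (the bijection $f$ witnessing bi-definability also induces the group isomorphism by conjugation), it reduces without loss of generality to the situation where $M(\eta_1,\zeta_1)$ and $M(\eta_2,\zeta_2)$ have the same domain and literally the same automorphism group $H$, and then writes down explicit permutation-group conditions: $\eta_\ell(n)=1$ iff there exist distinct repetition-free $n$-tuples $\bar a,\bar b$ lying in different $H$-orbits such that, for each coordinate $i$, some $h\in H$ matches $\bar a$ and $\bar b$ off the $i$-th coordinate; and $k<\zeta_\ell(n)$ iff a certain coherent system of $n$-tuples indexed by $[k]^n$ can always be simultaneously realized on a single repetition-free $k$-tuple. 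Since these conditions mention only the action of $H$ on the common domain, they are independent of $\ell$, which forces $\eta_1=\eta_2$ and $\zeta_1=\zeta_2$. Your proposal contains no analogue of these conditions, nor any argument that the relevant invariants separate distinct parameters, so the central claim is left unproved.

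A secondary inaccuracy: you assert that bi-definable structures ``have the same age''. Bi-definability (Definition \ref{bidef}) only transports $\emptyset$-definable sets along a bijection; it does not preserve the basic relations or the age, so your plan to read the $\zeta$-data off ``the finite substructures that fail to embed'' is not licensed as stated. The fix is the one the paper uses: since the structures are $\aleph_0$-categorical, $\emptyset$-definable sets are exactly the $Aut$-invariant ones, so every invariant you use must be formulated in terms of the automorphism group's action on tuples (orbits and orbit-matchings), as in the conditions $(b)_n$ and $(b')_{n,k}$ above, rather than in terms of substructure embeddings.
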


	In particular, we exhibit new continuum sized classes of homogeneous $\aleph_0$-categorical structures with the strong small index property whose automorphism groups are pairwise non-isomorphic (as in the case of the Henson digraphs \cite{henson_digraphs}).
	
	
\section{Definitions}\label{sec2}

	

	As a matter of notation, given a structure $M$ and $A \subseteq M$, and considering $Aut(M) = G$ in its natural action on $M$, we denote the pointwise (resp. setwise) stabilizer of $A$ under this action by $G_{(A)}$ (resp. $G_{\{ A \}}$).	
	
	We say that a countable structure $M$ is {\em homogeneous} if every isomorphism between finitely generated substructures of $M$ extends to an automorphism of $M$. As well known, a countable homogeneous structure $M$ is the so-called Fra\"iss\'e limit of its {\em age} $\mathbf{K} = \mathbf{K}(M)$, i.e. the collection of finitely generated substructures of $M$ (for details on these notions see e.g. \cite[Chapter 6]{hodges}). Ages of homogeneous structures are characterized by the following three conditions:
	
	\begin{enumerate}[(I)]
	\item If $B \in \mathbf{K}$ and $A$ is a substructure of $B$ (denoted $A \leq B$), then $A \in \mathbf{K}$ ({\em Hereditary Property}).
	\item For every ${B}_1, {B}_2 \in \mathbf{K}$ there are ${C} \in \mathbf{K}$ and embeddings $f_i: {B}_i \rightarrow {C}$ ($i = 1,2$) ({\em Joint Embedding Property}).
	\item For every ${A}, {B}_1, {B}_2 \in \mathbf{K}$ and embeddings $f_i: A \rightarrow B_i$ ($i = 1,2$), there are ${C} \in \mathbf{K}$ and embeddings $g_i : {B}_i \rightarrow {C}$ ($i = 1,2$) such that $g_1f_1 \restriction A = g_2f_2 \restriction A$ ({\em Amalgamation Property}).
\end{enumerate}
	
	\begin{definition}\label{def_amalgam}
	\begin{enumerate}[(1)]
	\item We refer to classes of finitely generated structures satisfying conditions (I)-(III) above as {\em amalgamation classes}.
	\item The structure $C$ in (III) above is called an {\em amalgam} of $B_1$ and $B_2$ over $A$.
	\item We say that the amalgamation from (III) is {\em disjoint} if in addition the $C$ and $g_i$ can be chosen so that $g_1(B_1) \cap g_2(B_2) = g_1f_1(A)$.
	\end{enumerate}
\end{definition}  

	We will mostly deal with countable homogeneous structures satisfying two additional conditions: {\em locally finite algebraicity} and {\em canonical amalgamation}.
	
	\begin{definition} Let $M$ be a countable structure and $G = Aut(M)$.
	\begin{enumerate}[(1)]
	\item We say that $a$ is algebraic over $A \subseteq M$ if the orbit of $a$ under $G_{(A)}$ is finite. 
	\item The {\em algebraic closure} of $A \subseteq M$ in $M$, denoted as $acl_M(A)$, is the set of elements of $M$ which are algebraic over $A$.
	\item We say that $M$ has {\em locally finite algebraicity} if for every finite $A \subseteq M$, $acl_M(A)$ is finite.
\end{enumerate}
\end{definition}

	Notice that in countable homogeneous structures with disjoint amalgamation (cf. Definition \ref{def_amalgam}) we have $acl_M(A) = \langle A \rangle_M$, i.e. the algebraic closure of $A$ in $M$ equals the substructure generated by $A$ in $M$ (see e.g. \cite[Theorem 7.1.8]{hodges}).

	\begin{definition}\label{can_amal} Let $M$ be an homogeneous structure and $\mathbf{K} = \mathbf{K}(M)$ its age. We say that $M$ has {\em canonical amalgamation} if there exists an operation $B_1 \oplus_A B_2$ on $\mathbf{K}^3$ satisfying the following conditions:
	\begin{enumerate}[(a)]
	\item $B_1 \oplus_A B_2$ is defined when ${A} \leq {B}_i$ ($i = 1,2$) and $B_1 \cap B_2 = A$;
	\item $B_1 \oplus_A B_2$ is an amalgam of $B_1$ and $B_2$ over $A$  (cf. Definition \ref{def_amalgam});
	\item if $B_1 \oplus_A B_2$ and $B'_1 \oplus_{A'} B'_2$ are defined, and there exist $f_i: B_i \cong B'_i$ ($i = 1,2$) with $f_1 \restriction A = f_2 \restriction A$, then there is:
	$$f: B_1 \oplus_A B_2 \cong B'_1 \oplus_{A'} B'_2$$
such that $f \restriction B_1 = f_1$ and $f \restriction B_2 = f_2$.
\end{enumerate}
\end{definition}

	We will use the following notation: given $A, B, C \leq M$ we write $A \equiv_B C$ to mean that there is an automorphism of $M$ fixing $B$ pointwise and mapping $A$~to~$C$. 
	
	\begin{definition}\label{indep_rel} Let $M$ be an homogeneous structure with canonical amalgamation. We define a ternary relation between finite substructures of $M$ by setting $A \pureindep[C] B$ if and only if $\langle A, B, C \rangle_M \cong \langle A, C \rangle_M \oplus_C \langle B, C \rangle_M$. 
\end{definition}
	
	In many cases the relation defined in Definition \ref{indep_rel} satisfies several properties of interest, but at this level of generality we only have three of these properties:

	\begin{proposition} Let $M$ be a countable homogeneous structure with canonical amalgamation. Then (recalling Def. \ref{indep_rel}) for $A, B, C$ finite substructures of $M$ we have:
\begin{enumerate}[$(A)$]
		\item (Invariance) if $A \pureindep[C] B$ and $f \in Aut(M)$, then $f(A) \pureindep[f(C)] f(B)$;
		\item (Existence) for every $A, B, C \leq M$, there exists $A' \equiv_B A$ such that $A' \pureindep[B] C$;
		\item (Stationarity) if $A \equiv_C A'$, $A \pureindep[C] B$ and $A' \pureindep[C] B$, then $A \equiv_{\langle BC \rangle} A'$.
\end{enumerate}
\end{proposition}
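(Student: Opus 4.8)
The plan is to verify the three properties directly from the definition of $\pureindep[C]$ in terms of canonical amalgamation, using only conditions (a)--(c) of Definition~\ref{can_amal} together with homogeneity. For \emph{Invariance}, suppose $A \pureindep[C] B$, so $\langle A,B,C\rangle_M \cong \langle A,C\rangle_M \oplus_C \langle B,C\rangle_M$ via an isomorphism fixing $A$, $B$, $C$ appropriately (here I should be careful about the precise meaning of the abstract isomorphism in Definition~\ref{indep_rel}, since the two copies of $C$ in the disjoint sum must be identified). Applying $f \in Aut(M)$ sends $\langle A,B,C\rangle_M$ isomorphically to $\langle f(A),f(B),f(C)\rangle_M$, and since $f$ is an isomorphism $\langle A,C\rangle_M \cong \langle f(A),f(C)\rangle_M$ and $\langle B,C\rangle_M \cong \langle f(B),f(C)\rangle_M$ compatibly on $C$; by the uniqueness clause (c) of canonical amalgamation these induce an isomorphism $\langle f(A),f(C)\rangle_M \oplus_{f(C)} \langle f(B),f(C)\rangle_M \cong \langle A,C\rangle_M \oplus_C \langle B,C\rangle_M$, and composing the three witnesses gives $f(A) \pureindep[f(C)] f(B)$.

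For \emph{Existence}, given $A, B, C \leq M$, form the structure $D := \langle A,B\rangle_M \oplus_B \langle B,C\rangle_M$, which lies in $\mathbf{K}(M)$ by clause (b); inside $D$ we have a copy $A'$ of $A$ over $B$ and a copy $C'$ of $C$ over $B$ with $A' \pureindep[B] C'$ essentially by construction (after identifying $\langle A',B,C'\rangle_M$ with all of $D$, using that $D$ is generated by $A' \cup B \cup C'$ when $\langle A,B\rangle_M = \langle A\cup B\rangle$ etc.). Since $M$ is homogeneous and $D$ embeds in $M$ with $\langle B,C\rangle_M$ sent to the actual $\langle B,C\rangle_M$, I can find an embedding $g \colon D \to M$ that is the identity on $\langle B,C\rangle_M$; then $g(A')$ is the desired $A' \equiv_B A$ with $A' \pureindep[B] C$. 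The small subtlety is ensuring the embedding can be taken to fix $\langle B,C\rangle_M$ pointwise rather than merely up to isomorphism, which follows from homogeneity of $M$ applied to the partial isomorphism $g^{-1}\!\restriction g(\langle B,C\rangle_M)$, or more directly from the extension property of the Fra\"iss\'e limit.

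For \emph{Stationarity}, suppose $A \equiv_C A'$, $A \pureindep[C] B$ and $A' \pureindep[C] B$. Then $\langle A,B,C\rangle_M \cong \langle A,C\rangle_M \oplus_C \langle B,C\rangle_M$ and $\langle A',B,C\rangle_M \cong \langle A',C\rangle_M \oplus_C \langle B,C\rangle_M$. The hypothesis $A \equiv_C A'$ gives an isomorphism $f_1 \colon \langle A,C\rangle_M \cong \langle A',C\rangle_M$ fixing $C$ pointwise; take $f_2$ to be the identity on $\langle B,C\rangle_M$. By clause (c) of Definition~\ref{can_amal} applied to these $f_1, f_2$ (which agree on $C$), there is an isomorphism $\langle A,C\rangle_M \oplus_C \langle B,C\rangle_M \cong \langle A',C\rangle_M \oplus_C \langle B,C\rangle_M$ extending both; transporting along the two displayed isomorphisms yields an isomorphism $h \colon \langle A,B,C\rangle_M \cong \langle A',B,C\rangle_M$ that is the identity on $\langle B,C\rangle_M$ and sends $A$ to $A'$. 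Finally, homogeneity of $M$ extends $h$ to an automorphism of $M$, witnessing $A \equiv_{\langle BC\rangle} A'$.

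The main obstacle I expect is bookkeeping around the identification of the two formal copies of $C$ inside $B_1 \oplus_A B_2$ and making precise what ``$\langle A,B,C\rangle_M \cong \langle A,C\rangle_M \oplus_C \langle B,C\rangle_M$'' means as a statement with the right maps fixed on the right pieces --- i.e.\ pinning down Definition~\ref{indep_rel} so that the uniqueness clause (c) is actually applicable. Once that is set up correctly, each of the three parts is a short diagram chase plus one invocation of homogeneity; there is no combinatorial content beyond the axioms of canonical amalgamation.
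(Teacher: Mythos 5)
The paper states this proposition without proof, and your direct verification from clauses (a)--(c) of Definition~\ref{can_amal} together with homogeneity (and the extension property of the Fra\"iss\'e limit for Existence) is exactly the intended argument. The subtlety you flag is the right one --- Definition~\ref{indep_rel} must be read so that the isomorphism $\langle A,B,C\rangle_M \cong \langle A,C\rangle_M \oplus_C \langle B,C\rangle_M$ commutes with the natural embeddings of $\langle A,C\rangle_M$ and $\langle B,C\rangle_M$ (and one passes to abstract copies intersecting exactly in the base so that $\oplus$ is defined) --- and with that reading all three parts go through as you describe.
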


	As mentioned above, usually the relation defined in Definition~\ref{indep_rel} satisfies several other properties of interest, this inspires the following definition:
	
	\begin{definition}[\cite{tent_ziegler, muller}]\label{def_stationary} Let $M$ be a countable homogeneous structure with locally finite algebraicity. We say that the ternary relation $\pureindep$ between finite substructures of $M$ is a {\em stationary independence relation} if it satisfies:
	\begin{enumerate}[(1)]
	\item (Invariance) if $A \pureindep[C] B$ and $f \in Aut(M)$, then $f(A) \pureindep[f(C)] f(B)$;
	\item (Symmetry) if $A \pureindep[C] B$, then $B \pureindep[C] A$.
	\item (Monotonicity) If $A \pureindep[C] \langle BD \rangle$ and $A \pureindep[C] B$, then $A \pureindep[\langle BC \rangle] D$;
	\item (Existence) for every $A, B, C \leq M$, there exists $A' \equiv_B A$ such that $A' \pureindep[B] C$;
	\item (Stationarity) if $A \equiv_C A'$, $A \pureindep[C] B$ and $A' \pureindep[C] B$, then $A \equiv_{\langle BC \rangle} A'$.
\end{enumerate}
\end{definition}

	A property of interest which is less common (but crucial for us) is:
	
	\begin{definition}[\cite{conant}]\label{free_stationay} We say that the stationary independence relation $\pureindep$ is free if it satisfies the following additional condition:
	\begin{enumerate}[(6)]
	\item (Freeness) if $A \pureindep[C] B$ and $C \cap AB \subseteq D \subseteq C$, then $A \pureindep[D] B$.
\end{enumerate}
\end{definition}

	Before proving Theorem \ref{SIP_implies_SSIP} we need a fact from \cite{hodges}:

	\begin{fact}[{\cite[Theorem 4.2.9]{hodges}}]\label{fact_1} Let $M$ be a countable homogeneous structure, $G = Aut(M)$, and suppose that for every finite algebraically closed $A, B \leq M$ we have $G_{(A \cap B)} = \langle G_{(A)} \cup G_{(B)} \rangle_G$. Let $H$ be a subgroup of $G$ such that there is some finite algebraically closed set $A$ with $G_{(A)} \leq H$. Then there is a unique smallest algebraically closed set $B \leq A$ such that $G_{(B)} \leq H$. Moreover, \mbox{for this $B$, $H \leq G_{\{B\}}$.}
\end{fact}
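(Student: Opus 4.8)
The plan is to work with the poset $\mathcal{F}$ consisting of all finite algebraically closed $B' \leq A$ with $G_{(B')} \leq H$, ordered by inclusion. Since $A$ itself lies in $\mathcal{F}$, this family is nonempty, and since $A$ is finite, $\mathcal{F}$ is a finite collection of finite sets. The first thing to verify is that $\mathcal{F}$ is closed under intersection: if $B_1, B_2 \in \mathcal{F}$, then $B_1 \cap B_2$ is again algebraically closed (by monotonicity of $acl_M$ we get $acl_M(B_1 \cap B_2) \subseteq acl_M(B_1) \cap acl_M(B_2) = B_1 \cap B_2$, and the reverse inclusion is automatic), it is still contained in $A$, and, applying the standing hypothesis to the pair $(B_1, B_2)$, we obtain $G_{(B_1 \cap B_2)} = \langle G_{(B_1)} \cup G_{(B_2)} \rangle_G \leq H$, using that $G_{(B_1)}, G_{(B_2)} \leq H$ and that $H$ is a subgroup. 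A nonempty finite family of sets closed under pairwise intersection has a least element, namely $B := \bigcap_{B' \in \mathcal{F}} B'$, which by construction is contained in every member of $\mathcal{F}$ and lies in $\mathcal{F}$; this is the required unique smallest algebraically closed set below $A$ whose pointwise stabilizer is contained in $H$.

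For the ``moreover'' clause I would show that every $h \in H$ fixes $B$ setwise. Fix $h \in H$. Since $G_{(B)} \leq H$ and $h \in H$, conjugation gives $G_{(h(B))} = h\, G_{(B)}\, h^{-1} \leq H$, and $h(B)$ is finite and algebraically closed because $h$ is an automorphism of $M$. Now run the intersection argument of the previous paragraph on the sets $B$ and $h(B)$: the set $B \cap h(B)$ is finite and algebraically closed, it is contained in $A$ because $B \leq A$, and by the hypothesis applied to $(B, h(B))$ together with the two inclusions just noted we get $G_{(B \cap h(B))} = \langle G_{(B)} \cup G_{(h(B))} \rangle_G \leq H$. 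Hence $B \cap h(B) \in \mathcal{F}$, so minimality of $B$ forces $B \cap h(B) = B$, i.e. $B \subseteq h(B)$; since $B$ is finite and $|h(B)| = |B|$, this yields $h(B) = B$, so $h \in G_{\{B\}}$. As $h \in H$ was arbitrary, $H \leq G_{\{B\}}$.

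The only genuinely non-routine step is the move in the second paragraph: one cannot apply the minimality of $B$ directly to $h(B)$, since $h(B)$ need not be contained in $A$, but one can apply it to $B \cap h(B)$, which \emph{is} contained in $A$ — and the stabilizer hypothesis is precisely what guarantees that this intersection still has its pointwise stabilizer inside $H$. Everything else is standard and can be dispatched in a line or two: that a finite intersection of algebraically closed sets is algebraically closed (hence a substructure of $M$, since algebraic closures contain the generated substructures), that $\mathcal{F}$ is finite and nonempty, and that a conjugate $h\,G_{(B)}\,h^{-1}$ equals $G_{(h(B))}$.
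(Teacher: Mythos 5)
The paper does not prove this statement itself --- it is quoted as a Fact from Hodges, Theorem 4.2.9 --- and your argument is correct and is essentially the standard proof of that theorem: the hypothesis makes the family of finite algebraically closed subsets of $A$ whose pointwise stabilizers lie in $H$ closed under intersection, giving the least element $B$, and the conjugation identity $hG_{(B)}h^{-1}=G_{(h(B))}$ together with minimality applied to $B\cap h(B)$ yields $H\leq G_{\{B\}}$. No gaps.
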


	The following proof is an abstract version of \cite[Lemma 27]{tent_mac}.

	\begin{proof}[Proof of Theorem \ref{SIP_implies_SSIP}] By Fact \ref{fact_1} it suffices to prove that for finite algebraically closed $A, B \leq M$ we have: $$G_{(A \cap B)} = \langle G_{(A)} \cup G_{(B)} \rangle_G.$$
The containment from right to left is trivial, so let $g \in G_{(A \cap B)}$. By Existence (cf.~Definition~\ref{def_stationary}(4)) there $h_1 \in G_{(A)}$ with $h_1g(A) \pureindep[A] B$. Then, since $g, h_1 \in G_{(A \cap B)}$ we have that $h_1g(A) \cap B = A \cap B$. Now, again by Existence, there $h_2 \in G_{(B)}$ with $h_2h_1g(A) \pureindep[B] A$. Then, by the above and the fact that $h_2 \in G_{(A \cap B)}$, we have that $h_2h_1g(A) \cap B = A \cap B$. Hence, by Freeneness (cf.~Definition~\ref{free_stationay}), we have that $h_2h_1g(A) \pureindep[A \cap B] A$. Now, again by Existence, there is $h_3 \in G_{(B)}$ such that $h_3(A) \pureindep[B] A$. Then, since $h_3 \in G_{(A \cap B)}$ we have that $h_3(A) \cap A = A \cap B$ and so, by Freeness, we have that $h_3(A) \pureindep[A \cap B] A$. Notice now that $h_3(A) \equiv_{A \cap B} h_2h_1g(A)$, since $g, h_1, h_2, h_3 \in G_{(A \cap B)}$. Hence, by Stationarity (cf.~Definition~\ref{def_stationary}(5)), we can find $h_4 \in G_{(A)}$ such that $h_4h_2h_1g \restriction A = h_3 \restriction A$. Thus, $h_* := h_3^{-1}h_4h_2h_1g \in G_{(A)}$ and so:
$$g = h_1^{-1}h_2^{-1}h_4^{-1}h_3h_* \in \langle G_{(A)} \cup G_{(B)} \rangle_G,$$
since $h_1, h_4, h_* \in G_{(A)}$ and $h_2, h_3 \in G_{(B)}$.
\end{proof}

\section{Applications}

	Given a relational language $L$ and $L$-structures $A, B_1, B_2$, with $A \subseteq B_1, B_2$, there is a very natural way of amalgamating $B_1$ and $B_2$ over $A$: the structure with domain $B_1 \cup B_2$ where the only relations are the relations from $B_1$ and the relations from $B_2$. This way of amalgamating is known as {\em free amalgamation}. 
	
	\begin{definition} We say that a countable homogeneous relational structure $M$ is {\em free} if its age $\mathbf{K}(M)$ is closed under free amalgamation.
\end{definition}

	It is easy to see that free amalgamation is an instance of canonical amalgamation (in the sense of Definition~\ref{can_amal}). Furthermore:
	
	\begin{proposition}\label{prop_free} Let $M$ be a free homogeneous relational structure. Then  the relation $\pureindep$ from Definition~\ref{can_amal} is a free stationay independence relation (cf. Def.~\ref{free_stationay}).
\end{proposition}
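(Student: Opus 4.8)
The plan is to check the six conditions of Definitions~\ref{def_stationary} and~\ref{free_stationay} in turn, most of which are either inherited or routine. First I would record that $M$ has locally finite algebraicity, so that those definitions apply: since $L$ is relational, $\langle A\rangle_M = A$ for every finite $A \subseteq M$, and since free amalgamation is in particular a disjoint amalgamation, $acl_M(A) = \langle A\rangle_M = A$ by \cite[Theorem 7.1.8]{hodges}. Moreover free amalgamation is an instance of canonical amalgamation, so by the Proposition immediately preceding Definition~\ref{def_stationary} the relation $\pureindep$ already satisfies Invariance, Existence and Stationarity, that is conditions~(1),~(4) and~(5). Hence it remains only to verify Symmetry~(2), Monotonicity~(3) and Freeness~(6).

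The engine for these is an explicit reformulation of $\pureindep$, to be read off from Definition~\ref{indep_rel} and the definition of free amalgamation: for finite $A, B, C \leq M$, $A \pureindep[C] B$ holds if and only if $A \cap B \subseteq C$ and every tuple lying in a relation of $M$ with all of its entries in $A \cup B \cup C$ already has all of its entries in $A \cup C$ or all of its entries in $B \cup C$. To justify this I would note that the inclusion $A \cap B \subseteq C$ is exactly the condition making $\langle AC\rangle_M \oplus_C \langle BC\rangle_M$ defined, in which case its universe is $A \cup B \cup C$; and the induced substructure of $M$ on $A \cup B \cup C$ has that same universe and each of its relations contains the corresponding relation of the free amalgam, so the two structures coincide if and only if they are abstractly isomorphic, which happens if and only if $M$ has no such ``crossing'' tuple over $A \cup B \cup C$.

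Granting this description, the three conditions become short and I expect no real difficulty. Symmetry is immediate, since the description is visibly symmetric in $A$ and $B$. For Monotonicity only the first hypothesis $A \pureindep[C] \langle BD\rangle$ is needed: unwinding the description, every relation-tuple over $A \cup B \cup C \cup D$ is confined to $A \cup C \subseteq A \cup B \cup C$ or to $B \cup C \cup D$, which is exactly what $A \pureindep[\langle BC\rangle] D$ demands, and the side condition $A \cap D \subseteq \langle BC\rangle$ drops out of $A \cap \langle BD\rangle \subseteq C$. Freeness is the same kind of bookkeeping: from $A \pureindep[C] B$ and $C \cap AB \subseteq D \subseteq C$ one gets $A \cap B \subseteq C \cap AB \subseteq D$, and any relation-tuple over $A \cup B \cup D$ is, by hypothesis, confined to $A \cup C$ or to $B \cup C$; intersecting back down to $A \cup B \cup D$ and using $C \cap AB \subseteq D$ and $D \subseteq C$ confines it to $A \cup D$ or to $B \cup D$, i.e.\ $A \pureindep[D] B$.

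The only spot calling for genuine care — and the closest thing here to an obstacle — is the handling of the various intersection side conditions, which are precisely the defining constraints on the operation $\oplus_C$, together with the elementary remark used above that an abstract isomorphism between two finite $L$-structures sharing a universe, the relations of one containing those of the other, forces the two to be equal. Beyond this the argument is a routine unwinding of definitions.
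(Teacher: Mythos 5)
Your proof is correct, and it is essentially the argument the paper has in mind: Proposition~\ref{prop_free} is stated there without proof as a routine verification, and the natural route is exactly yours — reduce $\pureindep$ to the explicit description ``$A \cap B \subseteq C$ and no relation-tuple of $M$ inside $A \cup B \cup C$ meets both $A \setminus C$ and $B \setminus C$'' and then check Symmetry, Monotonicity and Freeness by bookkeeping, with Invariance, Existence and Stationarity already supplied by the proposition on canonical amalgamation. Your extra care in passing from the abstract isomorphism in Definition~\ref{indep_rel} to actual equality (same finite universe, containment of relations, per-symbol counting) is a legitimate and correctly handled point that the paper leaves implicit.
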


	Using the following fact from \cite{siniora} it is now immediate to deduce Corollary \ref{SSIP_free} (for an overview of the notion occurring there see \cite[Section 6.2]{kechris}). Recall that a relational language $L$ is said to be {\em locally finite} if for every $n < \omega$ there are only finitely many relations of arity $n$ in $L$. With some abuse of notation, we say that a language $L$ is irreflexive if we only consider $L$-structures $M$ such that if $R \in L$ and $M \models R(a_1, ..., a_n)$, then the $a_i$ are distinct.

	\begin{fact}[\cite{siniora}]\label{fact_2} Let $\mathbf{K}$ be a free amalgamation class of finite structures in a finite relational language. Then $\mathbf{K}$ has the extension property for partial automorphisms.
\end{fact}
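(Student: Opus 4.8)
The plan is to reprove the cited result of Siniora, following the circle of ideas of Herwig and Herwig--Lascar. Unwinding the definition of the extension property for partial automorphisms, it suffices to prove: for every finite $A \in \mathbf{K}$ and every finite set $P = \{p_1, \dots, p_n\}$ of partial automorphisms of $A$, there is a finite $B \in \mathbf{K}$ containing $A$ as an induced substructure such that each $p_i$ is the restriction of an automorphism of $B$.

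First I would isolate the structural feature of free amalgamation classes that makes this work. Since $\mathbf{K}$ is hereditary, has the joint embedding property, and is closed under free amalgamation, we may write $\mathbf{K} = \mathrm{Forb}(\mathcal{F})$, where $\mathcal{F}$ is the set, up to isomorphism, of $\subseteq$-minimal finite $L$-structures not belonging to $\mathbf{K}$. The key point is that every $F \in \mathcal{F}$ is \emph{irreducible}: one cannot write its universe as a union $F_1 \cup F_2$ with $F_0 := F_1 \cap F_2$, both $F_1 \setminus F_0$ and $F_2 \setminus F_0$ nonempty, and no relation of $F$ having arguments both in $F_1 \setminus F_0$ and in $F_2 \setminus F_0$. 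Indeed, such a decomposition would exhibit $F$ as the free amalgam $F_1 \oplus_{F_0} F_2$ of two proper substructures; minimality of $F$ forces $F_1, F_2 \in \mathbf{K}$, whence $F \in \mathbf{K}$ because $\mathbf{K}$ is closed under free amalgamation, a contradiction.

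Next comes the construction of $B$, which is the Herwig--Lascar ``twisted union'' adapted to the present setting. Let $\Phi$ be the free group on generators $\phi_1, \dots, \phi_n$, one for each $p_i$; the $p_i$ determine a partial action of these generators on $A$. Using the Ribes--Zalesskii theorem that finite products of finitely generated subgroups of a free group are closed in its profinite topology --- equivalently, the combinatorial separation lemma of Herwig--Lascar --- one produces a finite group $\Gamma$ and a homomorphism $\Phi \to \Gamma$ through which this partial action factors coherently. One then forms $B$ from $\Gamma$-indexed conjugated copies (``sheets'') of $A$, glued along the graphs of the $p_i$ so that passing from one sheet to another by the image of $\phi_i$ carries the relevant copy of $\mathrm{dom}(p_i)$ onto the corresponding copy of $\mathrm{ran}(p_i)$ via $p_i$. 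Identifying $A$ with a distinguished sheet, the left-translation action of $\Gamma$ on $B$ supplies automorphisms extending the $p_i$, the profinite separation step being exactly what guarantees that the gluing is consistent and that distinct sheets are not spuriously collapsed. The property I would then extract from the construction is the crucial lemma: because the only relations holding in $B$ are local copies of relations of $A$, every \emph{irreducible} substructure of $B$ already embeds into $A$ --- an irreducible substructure spread over two sheets would split as a free amalgam of proper parts, which is impossible.

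Given this, $B \in \mathbf{K}$ is formal: if some $F \in \mathcal{F}$ embedded into $B$, then --- being irreducible, as noted above --- it would be an irreducible substructure of $B$ and hence would embed into $A$; but $A \in \mathbf{K} = \mathrm{Forb}(\mathcal{F})$, so $F \not\hookrightarrow A$, a contradiction. Note that this argument also covers free amalgamation classes with \emph{infinitely} many minimal forbidden structures (such as the odd cycles, for bipartite graphs), so one cannot shortcut it by merely quoting the Herwig--Lascar theorem for finitely many forbidden configurations. The main obstacle is the group-theoretic core of the construction: building the finite group $\Gamma$ so that each $p_i$ genuinely extends while the sheets stay ``free enough'' for the irreducibility dichotomy to apply; this is precisely where the profinite topology on free groups (Ribes--Zalesskii, or the Herwig--Lascar lemma) is indispensable, and everything else reduces to careful bookkeeping.
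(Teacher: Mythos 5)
The paper itself does not prove Fact~\ref{fact_2}: it is quoted as a black box from Siniora--Solecki \cite{siniora}, so there is no internal proof to compare you against; your sketch has to be measured against the actual proofs in the literature. At the level of strategy you have the right skeleton: the $\subseteq$-minimal finite structures outside a free amalgamation class are irreducible (your argument for this is correct), so it suffices to produce an EPPA-witness $B \supseteq A$ in which every irreducible substructure embeds into a copy of $A$; this locality property is indeed the engine of the known proofs.

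The gap is that this locality property is exactly the content of the theorem, and you assert it rather than prove it. The claim that in the $\Gamma$-indexed sheet construction ``the only relations holding in $B$ are local copies of relations of $A$'', so that an irreducible substructure cannot straddle two sheets, does not follow from the description you give: once the copies of $A$ are glued along the graphs of the $p_i$ and one passes to the quotient on which $\Gamma$ acts, identifications forced by words in the $p_i$ can superimpose relations coming from different sheets onto the same points, producing irreducible configurations contained in no single copy of $A$. Controlling precisely this is where the real work lies (via the profinite separation in Herwig--Lascar, or via the elementary valuation-style constructions of Hodkinson--Otto and Hubi\v{c}ka--Kone\v{c}n\'y--Ne\v{s}et\v{r}il), and invoking Ribes--Zalesskii does not by itself yield the ``irreducible substructures of $B$ embed into $A$'' lemma; the Herwig--Lascar theorem as usually stated is about finitely many forbidden homomorphic images, so extracting what you need requires an additional argument, which is what \cite{siniora} supplies (together with coherence, which your sketch does not address but the corollary does not need). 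Finally, your motivating example is wrong: bipartite graphs are not a free amalgamation class --- the free amalgam of two paths over their common endpoints can be an odd cycle --- and odd cycles of length at least $5$ are reducible, contradicting your own (correct) observation that minimal forbidden structures of a free amalgamation class must be irreducible; an example with infinitely many minimal forbidden irreducible structures needs a richer language, e.g. two binary relations.
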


	\begin{proof}[Proof of Corollary \ref{SSIP_free}] Of course Fact \ref{fact_2} can be used to show the extension property for partial automorphisms also for free amalgamation classes of finite irreflexive structures in a locally finite relational language. Thus, by  \cite[Sections 6.1-2]{kechris} we have the small index property. Hence, by Theorem \ref{SIP_implies_SSIP} \mbox{and Proposition \ref{prop_free} we are done.}
\end{proof}



	We now define the class of hypergraphs that appear in Theorem \ref{pairwise_non_iso}.
	\begin{definition}\label{iper_1} Let $\eta \in 2^{\omega}$ and $L(\eta)$ be the relational language which has exactly one relation symbol $R_n$ of arity $n$ if and only if $\eta(n) = 1$. For non-triviality reasons we restrict to the class of $L(\eta)$ with $\eta(0) = \eta(1) = 0$. Let $\mathbf{K}(\eta)$ be the class of finite $L(\eta)$-structure such that the relations $R_n$ (for $\eta(n) = 1$) are symmetric and irreflexive, i.e. if $K \models R(a_1, ..., a_n)$ then the $a_1, ..., a_n$ are distinct and $R(a_1, ..., a_n)$ iff $R(a_{\sigma(1)}, ..., a_{\sigma(n)})$ for every $\sigma \in Sym(\{ 1, ..., n \})$. The class $\mathbf{K}(\eta)$ is an amalgamation class. We call its Fra\"iss\'e limit $M({\eta})$ the random $\eta$-hypergraph.
\end{definition}

	For $\eta$ such that $\eta(n) = 1$ iff $n = k$, for fixed $k \geq 2$, the structure $M({\eta})$ is simply the random $k$-uniform hypergraph \cite{thomas}. Notice that $\mathbf{K}(\eta)$ is closed under free amalgamation, and so by our Corollary \ref{SSIP_free} we have:
	
	\begin{corollary} For $\eta$ as above, $M({\eta})$ has the strong small index property.
\end{corollary}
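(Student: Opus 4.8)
The plan is to recognize this corollary as an immediate instance of Corollary \ref{SSIP_free}, so the entire task reduces to checking that $M(\eta)$ satisfies the hypotheses there. First I would verify that $L(\eta)$ is a \emph{locally finite irreflexive} relational language: by Definition \ref{iper_1} it contains at most one relation symbol $R_n$ of each arity $n$ (present exactly when $\eta(n) = 1$), so there are only finitely many relations of any given arity; and the relations are required to be irreflexive, so in the sense fixed before Fact \ref{fact_2} the language is irreflexive. Second, I would recall the observation made right after Definition \ref{iper_1}, namely that $\mathbf{K}(\eta)$ is closed under free amalgamation: given $A \leq B_1, B_2$ in $\mathbf{K}(\eta)$ with $B_1 \cap B_2 = A$, the $L(\eta)$-structure on $B_1 \cup B_2$ whose relations are exactly those inherited from $B_1$ and from $B_2$ is again symmetric and irreflexive, hence lies in $\mathbf{K}(\eta)$, and it is plainly an amalgam of $B_1$ and $B_2$ over $A$. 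Since $M(\eta)$ is the Fra\"iss\'e limit of $\mathbf{K}(\eta) = \mathbf{K}(M(\eta))$, it is therefore a countable free homogeneous structure in a locally finite irreflexive relational language.

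With the hypotheses in hand, the proof is one line: apply Corollary \ref{SSIP_free} to $M(\eta)$ to conclude that it has the strong small index property. If one wishes to unwind this, the chain is: Fact \ref{fact_2} (through the remark in the proof of Corollary \ref{SSIP_free} extending it from finite to merely locally finite irreflexive relational languages) gives the extension property for partial automorphisms for $\mathbf{K}(\eta)$; this yields the small index property for $M(\eta)$ via \cite[Sections 6.1-2]{kechris}; and then Theorem \ref{SIP_implies_SSIP} together with Proposition \ref{prop_free} upgrades SIP to SSIP, using that a relational structure automatically has locally finite algebraicity since with disjoint amalgamation $acl_M(A) = \langle A \rangle_M = A$ for finite $A$.

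I do not expect a genuine obstacle here, since all the substantive work has already been carried out in Corollary \ref{SSIP_free}; the only point requiring a word of care is the passage from the \emph{finite} relational languages of Fact \ref{fact_2} to the \emph{locally finite} language $L(\eta)$, and this is handled exactly as in the proof of Corollary \ref{SSIP_free}: the extension property for partial automorphisms concerns one finite structure at a time, and a finite structure involves only finitely many of the relation symbols of $L(\eta)$, so one may work inside a finite sublanguage and invoke Fact \ref{fact_2} there. The remaining verifications—local finiteness and irreflexivity of $L(\eta)$, and closure of $\mathbf{K}(\eta)$ under free amalgamation—are immediate from Definition \ref{iper_1}.
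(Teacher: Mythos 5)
Your proposal is correct and follows exactly the paper's route: the paper derives this corollary by observing that $\mathbf{K}(\eta)$ is closed under free amalgamation (and $L(\eta)$ is locally finite and irreflexive by construction) and then invoking Corollary \ref{SSIP_free}. Your additional unwinding of Corollary \ref{SSIP_free} and the remark on passing from finite to locally finite languages match the paper's own treatment.
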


		\begin{definition}\label{iper_2} Let $\eta$ and $L(\eta)$ be as in the previous paragraph. Let $\zeta \in \omega^\omega$ be such that $\zeta(n) \geq n$ if $\eta(n) = 1$, and $\zeta(n) = 0$ otherwise. Let now $\mathbf{K}(\eta, \zeta)$ be the class of structures $K \in \mathbf{K}(\eta)$ such that, for $n < \omega$ such that $\zeta(n) > n$, $K$ does not embed the structure of size $\zeta(n)$ such that every tuple of $n$ distinct elements is in the relation $R_n$. The class $\mathbf{K}(\eta, \zeta)$ is an amalgamation class. We call its Fra\"iss\'e limit $M(\eta, \zeta)$ the random $\zeta$-free $\eta$-hypergraphs.
\end{definition}

	For fixed $m > k \geq 2$ and $\eta$ such that $\eta(n) = 1$ iff $n = k$, and $\zeta$ such that $\zeta(k) = m$ and $\zeta(n) = 0$ for $n \neq k$, the structure $M(\eta, \zeta)$ is simply the random $m$-free $k$-uniform hypergraph \cite{hru}. Notice that for $\zeta \in \omega^\omega$ such that $\zeta(n) = \eta(n)n$ we have $M(\eta, \zeta) \cong M(\eta)$, and so this setting actually generalizes the previous.	
	Finally, also in this case $\mathbf{K}(\eta, \zeta)$ is closed under free amalgamation, and so by our Corollary \ref{SSIP_free} we have:
	
	\begin{corollary} For $\eta, \zeta$ \mbox{as above, $M(\eta, \zeta)$ has the strong small index property.}
\end{corollary}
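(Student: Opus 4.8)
The plan is to derive this from Corollary \ref{SSIP_free}, so the task reduces to checking that $M(\eta,\zeta)$ is a countable free homogeneous structure in a locally finite irreflexive relational language. Two of these are immediate: $M(\eta,\zeta)$ is by construction the Fra\"iss\'e limit of the amalgamation class $\mathbf{K}(\eta,\zeta)$ (cf. Definition \ref{iper_2}), hence a countable homogeneous structure; and $L(\eta)$ has at most one relation symbol of each arity, so it is locally finite, while every structure in $\mathbf{K}(\eta,\zeta)$ interprets each $R_n$ irreflexively, so $L(\eta)$ counts as irreflexive. Thus the only real point is that $M(\eta,\zeta)$ is \emph{free}, i.e. that $\mathbf{K}(\eta,\zeta)$ is closed under free amalgamation.

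To verify this I would take $A \leq B_1, B_2$ in $\mathbf{K}(\eta,\zeta)$ with $B_1 \cap B_2 = A$ and form the free amalgam $C = B_1 \oplus_A B_2$, where $R_n^C = R_n^{B_1} \cup R_n^{B_2}$ for each $n$ with $\eta(n) = 1$; note that consequently an $n$-tuple lies in $R_n^C$ only if its entries all lie in $B_1$ or all lie in $B_2$. Fix $n$ with $\zeta(n) > n$ and suppose toward a contradiction that there is $X \subseteq C$ with $|X| = \zeta(n)$, every $n$-element subset of which is an $R_n$-edge of $C$; by the previous remark each such subset lies inside $B_1$ or inside $B_2$. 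The heart of the argument is the claim that then $X \subseteq B_1$ or $X \subseteq B_2$: otherwise pick $x \in X \setminus B_1$ and $y \in X \setminus B_2$, which are distinct and both outside $A = B_1 \cap B_2$; since $\eta(n) = 1$ forces $n \geq 2$ and $|X| = \zeta(n) > n$, we can choose an $n$-element $S \subseteq X$ with $x, y \in S$, and then $S$ is contained in neither $B_1$ nor $B_2$, so $S \notin R_n^C$, a contradiction. Hence $X$ lies inside some $B_i$, which therefore embeds the configuration forbidden at level $n$, contradicting $B_i \in \mathbf{K}(\eta,\zeta)$.

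This shows $C \in \mathbf{K}(\eta,\zeta)$, so $\mathbf{K}(\eta,\zeta)$ is closed under free amalgamation, $M(\eta,\zeta)$ is a free homogeneous structure in a locally finite irreflexive relational language, and Corollary \ref{SSIP_free} yields the strong small index property. The only step requiring any thought is the pigeonhole claim above, and it is entirely elementary; I would also remark in passing that the special case $\zeta(n) = \eta(n)n$ imposes no constraint and gives $M(\eta,\zeta) \cong M(\eta)$, already handled by the previous corollary, so nothing new is needed there.
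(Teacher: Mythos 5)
Your proposal is correct and follows exactly the paper's route: the paper simply notes that $\mathbf{K}(\eta,\zeta)$ is closed under free amalgamation and invokes Corollary \ref{SSIP_free}, leaving the closure unverified. Your pigeonhole argument (any $n$-subset of a putative forbidden configuration meeting both $B_1\setminus A$ and $B_2\setminus A$ fails to be an edge in the free amalgam, so the configuration lies wholly in one side) correctly supplies the omitted detail.
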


		We now re-state the main result of \cite{Sh_Pa_recon} (actually a more informative version of that result already stated in \cite{Sh_Pa_recon}) and use it to prove Theorem \ref{pairwise_non_iso}.
		
		\begin{fact}[\cite{Sh_Pa_recon}]\label{reconstruction} Let $M$ and $N$ be countable $\aleph_0$-categorical structures with the strong small index property and no algebraicity. Then $ Aut(M)$ and $Aut(N)$ are isomorphic as abstract groups if and only if $M$ and $N$ are bi-definable. Moreover, if $\pi: Aut(M) \cong Aut(N)$ is an abstract group isomorphism, then there is a bijection $f: M \rightarrow N$ witnessing the bi-definability of $M$ and $N$ such that $\pi(\alpha) = f \alpha f^{-1}$.	
\end{fact}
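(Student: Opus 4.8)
The plan is to reduce the whole statement---both the ``only if'' direction and the ``moreover''---to a single reconstruction step: \emph{given any abstract group isomorphism $\pi\colon Aut(M)\to Aut(N)$, produce a bijection $f\colon M\to N$ with $\pi(\alpha)=f\alpha f^{-1}$ for all $\alpha$}. The ``if'' direction is soft and I would dispatch it first: a bijection $f$ witnessing bi-definability sends $Aut(M)$-orbits on $M^n$ to $Aut(N)$-orbits on $N^n$ (these are exactly the $\emptyset$-definable relations, by Ryll--Nardzewski), and from this one checks directly that $\alpha\mapsto f\alpha f^{-1}$ is a group isomorphism. Conversely, once the conjugating $f$ is produced it automatically witnesses bi-definability: it conjugates $Aut(M)$ onto $Aut(N)$, hence sends orbits on $M^n$ to orbits on $N^n$ and, via $\pi^{-1}=$ conjugation by $f^{-1}$, conversely; so by Ryll--Nardzewski $f$ and $f^{-1}$ carry $\emptyset$-definable relations to $\emptyset$-definable relations, and $f$ is the map named in the ``moreover''.

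So fix $\pi$. The first step is to recover the topology. Writing $G=Aut(M)$, $G'=Aut(N)$, the small index property (a consequence of SSIP) says that the subgroups of $G$ of index $<2^{\aleph_0}$ are exactly the open ones---an open subgroup has countable index since $M$ is countable, and a small-index subgroup contains the pointwise stabiliser of a finite set---and likewise for $G'$. Since an abstract isomorphism preserves the index of every subgroup, $\pi$ and $\pi^{-1}$ carry open subgroups to open subgroups, so $\pi$ is a homeomorphism, i.e.\ an isomorphism of topological groups; in particular any property of an open subgroup expressible in terms of the topological group is transported by $\pi$.

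The heart of the argument, and the step I expect to be the main obstacle, is to recover the points together with the action---concretely, to isolate a property $P(H)$ of an open subgroup $H\le G$, stated purely in terms of the (topological) group, that holds precisely when $H=G_a$ for some $a\in M$. The natural route uses SSIP in the sharp form of Fact~\ref{fact_1} together with no algebraicity: each open $H$ lies between $G_{(A)}$ and $G_{\{A\}}$ for a finite set $A$ which, since $acl_M$ is trivial (so finite sets are algebraically closed), can be taken canonically associated with $H$; the point stabilisers are then the open $H$ with $|A|=1$ and $H=G_{\{A\}}=G_{(A)}$. The difficulty is to detect ``$|A|=1$'' intrinsically inside the lattice of open subgroups under intersection and conjugation, separating genuine point stabilisers from larger open subgroups such as setwise stabilisers of classes of a $\emptyset$-definable equivalence relation; this is exactly where SSIP (beyond plain SIP) and the no-algebraicity hypothesis do the real work, and it is the technical core of \cite{Sh_Pa_recon}. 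Granting this, and noting that the same $P$ characterises point stabilisers of $G'$ because $N$ too has SSIP and no algebraicity, $\pi$ restricts to a bijection $\{G_a:a\in M\}\to\{G'_b:b\in N\}$; moreover $a\mapsto G_a$ is injective, since $G_a=G_b$ would give $b\in acl_M(a)=\{a\}$. Hence one obtains a bijection $f\colon M\to N$ with $\pi(G_a)=G'_{f(a)}$.

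It then remains to verify the conjugation formula and conclude. For $\alpha\in G$ and $a\in M$ we have $\alpha G_a\alpha^{-1}=G_{\alpha(a)}$, so $\pi(\alpha)\,G'_{f(a)}\,\pi(\alpha)^{-1}=\pi(G_{\alpha(a)})=G'_{f(\alpha(a))}$; since distinct points of $N$ have distinct stabilisers, $\pi(\alpha)(f(a))=f(\alpha(a))$ for every $a$, i.e.\ $\pi(\alpha)=f\alpha f^{-1}$. By the reduction in the first paragraph this simultaneously yields the ``only if'' direction and the ``moreover''. Everything here except the group-theoretic identification of point stabilisers in the third step is routine; that identification is the crux of the proof.
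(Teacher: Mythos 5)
This statement is not proved in the paper at all: it is imported verbatim from \cite{Sh_Pa_recon} as a Fact, so there is no internal proof to measure you against; the only question is whether your sketch would stand on its own. It does not, and you say so yourself. The outer layers are fine: the ``if'' direction via Ryll--Nardzewski (a bi-definability bijection conjugates automorphism groups), the use of SIP to turn an abstract isomorphism $\pi$ into a topological one, and the final derivation of $\pi(\alpha)=f\alpha f^{-1}$ from a bijection $f$ with $\pi(G_a)=G'_{f(a)}$ are all routine and correct. But the entire content of the fact sits in the step you introduce with ``Granting this'': an intrinsic, purely group-theoretic characterization of the point stabilizers among the small-index (equivalently open) subgroups, valid for every countable $\aleph_0$-categorical $M$ with SSIP and no algebraicity. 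Without that characterization you cannot even define $f$, so what you have written is a reduction of the theorem to its own core, i.e.\ a re-citation of \cite{Sh_Pa_recon} rather than a proof.

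Two further cautions about the step you black-box. First, your proposed route through Fact~\ref{fact_1} is not available in this generality: that fact requires the hypothesis $G_{(A\cap B)}=\langle G_{(A)}\cup G_{(B)}\rangle_G$ for finite algebraically closed $A,B$, which in this paper is only established (Theorem~\ref{SIP_implies_SSIP}) under the extra assumption of a free stationary independence relation; a general $\aleph_0$-categorical structure with SSIP and no algebraicity need not be known to satisfy it, so even the ``canonical'' finite set attached to an open subgroup needs an argument. Second, identifying ``$|A|=1$'' inside the lattice of open subgroups is genuinely delicate: with no algebraicity and transitivity the point stabilizers are maximal open proper subgroups, but so can be setwise stabilizers $G_{\{A\}}$ of larger finite sets (and other groups squeezed between $G_{(A)}$ and $G_{\{A\}}$), so maximality alone does not separate them; distinguishing these is exactly the work done in \cite{Sh_Pa_recon}, and your proposal contains no mechanism for it.
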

		
		\begin{definition}\label{bidef} We say that two structures $M$ and $N$ are bi-definable if there is a bijection $f:M \rightarrow N$ such that for every $A \subseteq M^n$, $A$ is $\emptyset$-definable in $M$ if and only if $f(A)$ is $\emptyset$-definable in $N$.
\end{definition}

	\begin{proof}[Proof of Theorem \ref{pairwise_non_iso}] As noticed after Definition \ref{iper_2}, every random $\eta$-hypergraph can be considered as a $\zeta$-free $\eta$-hypergraph for appropriate $\zeta$, and so it suffices to deal with the class of $\zeta$-free $\eta$-hypergraphs. Let $M_1 = M(\eta_1, \zeta_1)$ and $M_2 = M(\eta_2, \zeta_2)$ and suppose that $\pi: Aut(M_1) \cong Aut(M_2)$. We will show that $(\eta_1, \zeta_1) = (\eta_2, \zeta_2)$. By Fact \ref{reconstruction}, there is $f: M_1 \rightarrow M_2$ witnessing bi-definability and inducing $\pi$. Without loss of generality $f = id_{M_1}$, and so $Aut(M_1) = Aut(M_2) := H$. Denote the domain of $M_{1}$ as $A$ (which is the same as the domain of $M_2$). First of all we show that $\eta_1 = \eta_2$. Notice that for every $1 < n < \omega$ the following assertions are equivalent:
	\begin{enumerate}[$(a)_n$]
	\item $\eta_{\ell} (n) = 1$ ($\ell = 1, 2$);
	\item there are $\bar{a} \neq \bar{b} \in A^n$ each with no repetitions such that:
	\begin{enumerate}[(i)]
	\item for every $h \in H$, $h(\bar{a}) \neq h(\bar{b})$;
	\item for every $i < n$ there exists $h \in H$ such that $\bigwedge_{i \neq j < n} h(a_j) = b_j$.
	\end{enumerate}
\end{enumerate}
Since the statement $(b)_n$ does not depend on $\ell$ we conclude that $\eta_1 = \eta_2$. Suppose now that $\eta_1 = \eta_2$, we show that $\zeta_1 = \zeta_2$. Given $n \leq k < \omega$ we denote by $[k]^n$ the set of subsets of $\{ 1, ..., k \}$ of size $n$. Notice that for every $1 < n \leq k < \omega$ the following assertions are equivalent:
	\begin{enumerate}[$(a')_{n, k}$]
	\item $k < \zeta_{\ell} (n)$ ($\ell = 1, 2$);
	\item for every $\{ \bar{a}_x = (a^x_{x(1)}, ..., a^x_{x(n)}) : x = \{ x(1) < \cdots < x(n) \} \in [k]^n \}$ each with no repetitions if (i') then (ii'), where:
	\begin{enumerate}[(i')]
	\item for every $x \neq y \in [k]^n$ with $|x \cap y| > 0$ there exists $h_{x, y} \in H$ such that $\bigwedge_{p \in x \cap y} h_{x, y}(a^{x}_{p}) = a^{y}_{p}$;
	\item there exists $\bar{a}_* = (a^*_1, ..., a^*_k) \in A^k$ with no repetitions and $h_x \in H$, $x \in [k]^n$, such that $\bigwedge_{i < n}h_x(a^x_{x(i)}) = a^*_{x(i)}$.
	\end{enumerate}
\end{enumerate}
Since the statement $(b)_{n, k}$ does not depend on $\ell$ we conclude that $\zeta_1 = \zeta_2$.
\end{proof}

\end{document}